\documentclass{amsart}
\usepackage{graphicx} 
\usepackage{amsmath}
\usepackage{amssymb}
\usepackage{amsbsy}
\usepackage{amsfonts}
\usepackage{curves} 
\usepackage{bm}
\usepackage[numbers,sort&compress]{natbib}  
\usepackage{comment}
\newtheorem{theorem}{Theorem}[section]%
\newtheorem{corollary}[theorem]{Corollary}%
\newtheorem{proposition}[theorem]{Proposition}%
\newtheorem{construction}[theorem]{Construction}%
\newcommand{\Sz}{\mathrm{Sz}}
\newcommand{\PSL}{\mathrm{PSL}}
\newcommand{\PGL}{\mathrm{PGL}}

  \def\G{\Gamma}

  \def\G{\Gamma}

\def\nd{\mathrel{\bigm|\kern-.7em/}}

\def\PSL{\hbox{\rm PSL}}   \def\PGL{\hbox{\rm PGL}}
 \def\P\GL{\hbox{\rm P\GL}}  
  \def\Aut{\hbox{\rm Aut}}

\title{Testing chirality on hypertopes}
\author{Wei-Juan Zhang}
\address{Wei-Juan Zhang, School of Mathematics and Statistics, Xi'an Jiaotong University, Xi'an 710049, China}
\email{weijuanzhang@mail.xjtu.edu.cn}

\author{Dimitri Leemans}
\address{Dimitri Leemans, D\'epartement de Math\'ematique, Universit\'e Libre de Bruxelles, C.P.216, Boulevard du Triomphe, 1050 Bruxelles, Belgium}
\email{leemans.dimitri@ulb.be}

\date{\today}

\keywords{hypertope, chirality, $C^+$-group, coset geometry}
\subjclass[2000]{51E24, 20F05}

\begin{document}
\begin{abstract}
    In this paper we give group-theoretical conditions on the maximal parabolic subgroups of a coset geometry for it to be a chiral hypertope, bypassing the need to construct the incidence graph of the coset geometry to determine whether or not it is a chiral hypertope. This result permits to study much larger coset geometries with computers and gives hope on proving theoretical results about coset geometries that are chiral hypertopes.
\end{abstract}

\maketitle

\section{Introduction}
In 2016, Maria Elisa Fernandes, Dimitri Leemans and Asia Weiss introduced the concept of hypertope\footnote{A hypertope is a thin and residually connected incidence geometry (see Section~\ref{sec:preliminaries} for definitions).} as a generalization of abstract polytopes~\cite{FLW16}.
They studied in particular two families of hypertopes, namely the regular ones (where the automorphism group has a unique orbit on the set of chambers) and the chiral ones (where the automorphism group has two orbits on the set of chambers with adjacent chambers in distinct orbits).
They showed among other things that the automorphism group of a regular hypertope has to be a $C$-group and that the automorphism group of a chiral hypertope has to be a $C^+$-group.

In the regular case, given a C-group, one can easily construct a coset geometry, and then, results from Buekenhout, Hermand, Dehon and Leemans permit to translate the geometric properties of thinnness, residual connectedness and flag-transitivity in group-theoretical properties (see~\cite{Dehon94} for instance). Such tests are available in the computational algebra software {\sc Magma}~\cite{Magma} with the built-in functions {\rm IsThin}, {\rm IsResiduallyConnected} and {\rm IsFTGeometry}. 
The group-theoretical conditions on the coset geometries made it possible to obtain several classification results, for instance for the groups $\PSL(2,q)$ and $\PGL(2,q)$~\cite{CJL15}, the Suzuki groups $\Sz(q)$~\cite{CL15} and the symmetric groups~\cite{FL18}.

Given a $C^+$-group, Fernandes, Leemans and Weiss explain how to construct a coset geometry, but there, 
unfortunately, the only way so far
to check if a $C^+$-group $(G^+, R)$ gives a chiral hypertope is to convert the coset geometry in an incidence geometry, that is to eventually construct the incidence graph $\mathcal G$ of the coset geometry $\Gamma$, then check that the group $G^+$ has two orbits on the maximal cliques of $\mathcal{G}$ (that correspond to the chambers of $\Gamma$) and that there is no automorphism of $G^+$ that fuses the two orbits. The construction of the incidence graph is highly inefficient and quickly reaches limits when one tries to construct chiral hypertopes with a computer as the incidence graph of a coset geometry is a graph whose vertex set is the set of all elements of the geometry. Moreover, this gap in the theory has stalled the theoretical study of chiral hypertopes for years.

A partial solution to the problem above was given in~\cite{LT} where Leemans and Tranchida showed that if $(G^+,R)$ is a $C^+$-group and if the incidence system $\Gamma$ associated to $(G^+,R)$ is chiral, then $\Gamma$ is also residually connected. But still, testing that the group has two orbits on the set of chambers of $\G$ remains a serious issue.

In this paper, we give the missing piece of the puzzle: we give group-theoretical conditions of the coset incidence system associated to a $C^+$-group $(G^+,R)$ for it to be a chiral hypertope, bypassing the costly need to construct the incidence graph of the coset incidence system and opening the possibility to check if a $C^+$-group arising from a group $G^+$ gives a chiral hypertope in a very efficient way for groups much larger than before. Our main result is the following theorem.

\begin{theorem}\label{Chiral hypertopes}
Let $(G^+, R)$ be a $C^+$-group of rank $r \ge 3$ with $I = \{0, 1, \dots, r-1\}$, and let $\G(G^+; (G^+_i)_{i \in I}) = \G(G^+, R)$ be the coset geometry associated to $(G^+, R)$ by Construction~\ref{hyper}. 
Then $\G$ is a chiral hypertope if and only if
for any $k\in I$ we have the following (where $J := I\setminus \{k\})$: 
\begin{enumerate} 
\item[(i)] $G^+$ acts transitively on the chambers of the $(r-1)$-truncation $^J\G(G^+; (G^+_i)_{i \in J})$,
\item[(ii)] $\bigcap_{t \in T} G^+_t = {1_{G^+}}$ for each subset $T \subset I$ with $|T| = r-1$, 
\item[(iii)] $\left| \bigcap_{j \in I \setminus {k}} (G^+_k G^+_j) \right| = 2 |G^+_k|$ for some $k \in I$, 
\item[(iv)] there is no automorphism of $G^+$ that inverts all elements of $R$.
\end{enumerate} 
\end{theorem}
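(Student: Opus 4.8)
The plan is to prove each direction of the equivalence by matching, one at a time, the defining properties of a chiral hypertope — being a geometry, thinness, residual connectedness, having exactly two chamber-orbits with adjacent chambers separated, and failing to be regular — to the group conditions (i)--(iv). The essential feature that separates this from the classical Buekenhout--Hermand--Dehon--Leemans dictionary is that $G^+$ is \emph{not} assumed transitive on the chambers of $\G$: it has at most two chamber-orbits and is transitive only on corank-$1$ flags. I would therefore first fix the coset dictionary for $\G(G^+;(G^+_i)_{i\in I})$ — elements of type $i$ are right cosets of $G^+_i$, incidence is non-empty intersection, chambers are transversals of mutually incident cosets — and then isolate which of the classical flag-transitive translations survive when only transitivity on corank-$1$ flags, i.e.\ condition (i), is available.

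Thinness is the most concrete step, and I would treat it first. By (i), every flag of cotype $\{k\}$ is $G^+$-equivalent to the standard one $\{G^+_j : j \in I\setminus\{k\}\}$, so it suffices to count the chambers extending this standard flag. A type-$k$ element $G^+_k g$ is incident with every $G^+_j$ exactly when $g \in \bigcap_{j \neq k} G^+_k G^+_j =: S$; since $S$ is a union of cosets $G^+_k g$, the number of completing elements is $|S|/|G^+_k|$. Thus condition (iii) says precisely that the standard corank-$1$ flag of cotype $k$ lies in exactly two chambers, and (i) propagates this count to every cotype, making (iii) (for one $k$) together with (i) equivalent to thinness.

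Next I would use (ii) to pin down the action. The intersection $\bigcap_{t\in T} G^+_t$ is the $G^+$-stabiliser of the standard flag of type $T$, so for $|T|=r-1$ its triviality forces corank-$1$ flags, and a fortiori chambers, to have trivial stabiliser; hence $G^+$ acts freely on chambers and each chamber-orbit is a regular $G^+$-set. Combined with the two-chamber count from (iii), this presents $\G$ as a rotary structure on which $G^+$ acts with two chamber-orbits and adjacent chambers separated. Residual connectedness, the last ingredient needed for $\G$ to be a hypertope, I would obtain for one direction from the Leemans--Tranchida theorem~\cite{LT} and, for the converse, by an induction on the rank in which each proper residue is shown to be the coset geometry of a lower-rank $C^+$-group to which (i)--(iii) restrict. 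I expect the principal obstacle here: the classical residual-connectedness criterion presupposes full chamber-transitivity, so it must be replaced by an argument using only corank-$1$-flag transitivity and the intersection condition (ii), and establishing the inductive residue-as-$C^+$-group description carefully is the delicate part.

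Finally, granting (i)--(iii), $\G$ is a thin residually connected geometry on which $G^+$ acts freely with adjacent chambers in distinct orbits, and the counting above forces exactly two chamber-orbits; the structure is then regular rather than chiral precisely when some automorphism of $G^+$ interchanges these orbits. I would identify such an orbit-swapping automorphism with an automorphism of $G^+$ inverting every element of $R$: in the regular case the extra reflection conjugates each generator of $R$ to its inverse, and conversely an $R$-inverting automorphism supplies the missing reflection that fuses the orbits. Hence (iv) excludes regularity and secures chirality. For the converse I would read each equivalence backwards — a chiral hypertope is thin, residually connected, transitive on corank-$1$ flags, acts freely on chambers, and admits no orbit-fusing automorphism — recovering (i)--(iv) in turn, with (iv) again the point requiring care, where I must show that the nonexistence of a geometric symmetry swapping the chamber-orbits is equivalent to the nonexistence of an $R$-inverting automorphism of the abstract group $G^+$.
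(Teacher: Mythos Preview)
Your plan is essentially the paper's proof: both match the defining properties of a chiral hypertope to conditions (i)--(iv) one at a time, using the corank-$1$ flag count $|\bigcap_{j\neq k} G^+_kG^+_j|/|G^+_k|$ for thinness and the two-orbit structure (this is exactly Corollary~\ref{trivialstab}), and invoking Theorem~\ref{FTtruncation} for the forward direction of (i).

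The one place you diverge is residual connectedness, which you single out as ``the principal obstacle'' and propose to handle by an induction on rank, describing residues as lower-rank $C^+$-groups. The paper needs no such argument. Residual connectedness is required only in the direction (i)--(iv) $\Rightarrow$ chiral hypertope (in the converse it is part of the hypothesis ``hypertope''), and there the paper's order of operations is: first obtain thinness and exactly two chamber-orbits from (i)+(ii)+(iii) via Corollary~\ref{trivialstab}; then use (iv) to rule out an orbit-fusing automorphism, so that $\G$ is already a \emph{chiral geometry}; and only then apply Theorem~\ref{LTRC} to conclude residual connectedness in one line. Your anticipated ``delicate part'' --- replacing the flag-transitive residual-connectedness criterion by something that works under mere corank-$1$ transitivity --- simply never arises, because Leemans--Tranchida is invoked after chirality is in hand, not before.
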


The conditions we give in the theorem above
make it also easier to check theoretically that a given $C^+$-group gives a chiral hypertope.

The paper is organized as follows.
In Section~\ref{sec:preliminaries}, we give all the necessary definitions and notations to understand this paper, as well as some results needed in the proof of our main result.
In Section~\ref{sec:chamber}, we obtain group-theoretical conditions on the maximal parabolic subgroups of a coset incidence system for it to have two orbits on its chambers.
In Section~\ref{sec:chiralhypertope}, we put everything together in order to prove Theorem~\ref{Chiral hypertopes}.
Finally, in Section~\ref{code}, we provide a {\sc Magma} function to check if a $C^+$-group gives a coset incidence system that is a chiral hypertope.

\section{Background}
\label{sec:preliminaries}
Most of the definitions in this section come from~\cite{LT}.
\subsection{Incidence geometries}
An {\it incidence system}  is a 4-tuple $\Gamma := (X, *, t, I)$ such that
\begin{itemize}
\item $X$ is a set whose elements are called the {\it elements}, or {\it faces}, of $\Gamma$;
\item $I$ is a set whose elements are called the {\it types} of $\Gamma$;
\item $t:X\rightarrow I$ is a {\it type function}, associating to each element $x\in X$ of $\Gamma$ a type $t(x)\in I$;
\item $*$ is a binary relation on $X$ called {\em incidence}, that is reflexive, symmetric and such that for every $x,y\in X$, if $x*y$ and $t(x) = t(y)$ then $x=y$.
\end{itemize}
The {\it rank} of $\Gamma$ is the cardinality of $I$.
A {\it flag} is a set of pairwise incident elements of $\Gamma$. The {\em rank} of a flag $F$ is its cardinality and the {\em corank} of $F$ is the cardinality of $I\setminus t(F)$.
The {\it type} of a flag $F$ is $\{t(x) : x \in F\}$ and a flag of type $I$ is called a {\em chamber}.

An incidence system $\Gamma$ is a {\it geometry} or {\it incidence geometry} if every flag of $\Gamma$ is contained in a chamber.
An element $x$ is {\em incident} to a flag $F$, and we write $x*F$ for that, provided $x$ is incident to all elements of $F$.
If $\Gamma = (X, *, t, I)$ is an {\it incidence geometry} and $F$ is a flag of $\Gamma$,
the {\em residue} of $F$ in $\Gamma$ is the incidence geometry $\Gamma_F := (X_F, *_F, t_F, I_F)$ where $X_F := \{ x \in X : x * F, x \not\in F\}$;  $I_F := I \setminus t(F)$;  $t_F$ and $*_F$ are the restrictions of $t$ and $*$ to $X_F$ and $I_F$.

The {\it incidence graph} of $\Gamma$ is the graph whose vertex set is $X$ and where two distinct vertices are joined provided the corresponding elements of $\Gamma$ are incident.

An incidence system $\Gamma$ is {\em connected} if its incidence graph is connected. It is
{\em residually connected} when each residue of rank at least two of $\Gamma$ (including $\Gamma$ itself) has a connected incidence graph. 

An incidence system $\Gamma$ is {\it thin} (respectively {\em firm}) when every residue of rank one of $\Gamma$ contains exactly (respectively at least) two elements. 

Let $\Gamma =(X,*, t,I)$ be an incidence system.
An {\em automorphism} of $\Gamma$ is a 
permutation $\alpha$ of $X$ inducing a permutation of $I$ such that
\begin{itemize}
\item for each $x$, $y\in X$, $x*y$ if and only if $\alpha(x)*\alpha(y)$;
\item for each $x$, $y\in X$, $t(x)=t(y)$ if and only if $t(\alpha(x))=t(\alpha(y))$.
\end{itemize}
An automorphism $\alpha$ of $\Gamma$ is called {\it type preserving} when for each $x\in X$, $t(\alpha(x))=t(x)$.
The set of type-preserving automorphisms of $\Gamma$ is a group denoted by $Aut_I(\Gamma)$.
The set of automorphisms of $\Gamma$ is a group denoted by  $Aut(\Gamma)$.
A group $G\leq Aut_I(\Gamma)$ acts {\em flag-transitively} on $\Gamma$ if, for each $J\subseteq I$, the group $G$ is transitive on the set of flags of type $J$ of $\Gamma$. In this case, we also say that $\Gamma$ is {\em flag-transitive}.
The following proposition shows how, starting from a group $G$, we can construct an incidence system whose type-preserving automorphism group contains $G$.

\begin{proposition}(Tits, 1956)~\cite{Tits}\label{tits}
Let $n$ be a positive integer
and $I:= \{0,\ldots ,n-1\}$.
Let $G$ be a group together with a family of subgroups ($G_i$)$_{i \in I}$, $X$ the set consisting of all cosets $G_ig$ with $g \in G$ and $i \in I$, and $t : X \rightarrow I$ defined by $t(G_ig) = i$.
Define an incidence relation $*$ on $X\times X$ by:
\begin{center}
$G_ig_1 * G_jg_2$ if and only if $G_ig_1 \cap G_jg_2 \neq \emptyset$.
\end{center}
Then the 4-tuple $\Gamma := (X, *, t, I)$ is an incidence system having a chamber.
Moreover, the group $G$ acts by right multiplication on $\Gamma$ as a group of type preserving automorphisms.
Finally, the group $G$ is transitive on the flags of rank less than 3.
\end{proposition}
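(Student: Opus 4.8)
The plan is to verify the three assertions in turn, first fixing the convention that $X$ is the disjoint union $\bigsqcup_{i\in I}\{G_ig : g\in G\}$, so that each coset carries its index $i$ as a tag. This is precisely what makes the type map $t$ well defined: without it, if $G_i=G_j$ as subgroups for some $i\neq j$, a single subset of $G$ would receive two types. With this convention in place, I would check the four axioms of an incidence system. Reflexivity of $*$ is immediate because $g\in G_ig$, and symmetry is immediate because the condition $G_ig_1\cap G_jg_2\neq\emptyset$ is symmetric. The only axiom requiring an argument is the last: if $t(G_ig_1)=t(G_jg_2)$ then $i=j$, and two right cosets of the same subgroup that meet must coincide, so $G_ig_1\cap G_ig_2\neq\emptyset$ forces $G_ig_1=G_ig_2$. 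To exhibit a chamber I would take the flag $\{G_0,G_1,\dots,G_{n-1}\}$ of cosets of the identity: any two of these contain $1_G$, hence meet, so this set is a flag of type $I$.

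For the second assertion I would define, for each $g\in G$, the map $\rho_g\colon G_ih\mapsto G_i(hg)$. First I would check that $\rho_g$ is well defined and a permutation of $X$ (with inverse $\rho_{g^{-1}}$), which holds because $G_ih=G_ih'$ is equivalent to $hh'^{-1}\in G_i$, a condition unchanged by right multiplication. Type preservation is built into the definition, since $\rho_g$ fixes the index $i$. The substantive computation is that $\rho_g$ preserves incidence; here I would use the identity $(G_ih_1\cap G_jh_2)g=G_i(h_1g)\cap G_j(h_2g)$, valid because right translation by $g$ is a bijection of $G$, which shows that one intersection is nonempty exactly when the other is.

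The third assertion, transitivity on flags of rank at most two, is where the only genuine idea appears. For rank $0$ there is nothing to prove, and for rank $1$, given two elements $G_ig_1$ and $G_ig_2$ of the same type, the element $g=g_1^{-1}g_2$ satisfies $\rho_g(G_ig_1)=G_ig_2$. For rank $2$, I would exploit the incidence hypothesis to straighten both flags: given incident pairs $\{G_ig_1,G_jg_2\}$ and $\{G_ig_1',G_jg_2'\}$ with $i\neq j$, I would choose $a\in G_ig_1\cap G_jg_2$ and $b\in G_ig_1'\cap G_jg_2'$, so that the flags rewrite as $\{G_ia,G_ja\}$ and $\{G_ib,G_jb\}$; then $g=a^{-1}b$ maps the first to the second. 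The main point to handle carefully is exactly this use of a common representative: it is the nonemptiness of the intersection, guaranteed by incidence, that allows one to pick $a$ and $b$, and this is the step that would fail for a pair of non-incident elements, which of course do not form a flag in the first place.
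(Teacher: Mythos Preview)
Your proof is correct. The paper does not actually prove this proposition; it is quoted from Tits (1956) as a classical result and stated without proof, so there is no argument in the paper to compare against. Your verification of the incidence-system axioms, the existence of the base chamber $\{G_0,\dots,G_{n-1}\}$, the right-multiplication action, and the transitivity on rank~$\le 2$ flags via a common representative is the standard route and is carried out cleanly; in particular, your handling of the disjoint-union convention for $X$ and the common-representative trick for rank~$2$ flags are exactly the points that need care.
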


The incidence system constructed by the proposition above is called a {\em coset incidence system} and will be denoted by $\Gamma(G; (G_i)_{i\in I})$. It might not be a geometry, but if it is a geometry we call it a {\em coset geometry}.

Given a family of subgroups $(G_i)_{i\in I}$ and $J\subseteq I$, we define $G_J := \cap_{j \in J}G_j$. The subgroups $G_J$ are called the {\em parabolic subgroups} of the coset geometry $\Gamma(G; (G_i)_{i\in I})$. Moreover $G_j = G_{\{j\}}$ for each $j\in I$ and the subgroups $G_j$ are called the {\em maximal parabolic subgroups} of $\Gamma$.

In the case of flag-transitive coset geometries, there is an easy group-theoretical way to test residual connectedness, which was originally proved by Francis Buekenhout and Michel Hermand.

 \begin{theorem}\cite[Corollary 1.8.13]{BC}
\label{proposition:RC}
Suppose \(I\) is finite and let \(\Gamma= \Gamma(G,(G_i)_{i\in I})\) be a geometry over \(I\) on which \(G\) acts flag transitively. Then \(\Gamma\) is residually connected if and only if \(G_J = \langle G_{J\cup\{i\}}~|~i\in I\setminus J\rangle\) for every \(J\subseteq I\) with \(|I\setminus J| \ge 2\).
\end{theorem}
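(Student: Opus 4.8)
The plan is to reduce residual connectedness of $\Gamma$ to a single, purely group-theoretic connectedness criterion, applied once to every residue. I would carry this out in three steps.

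\emph{Step 1: reduction to standard residues.} Because $G$ acts flag-transitively and type-preservingly, it is transitive on the flags of each type $J$, so any two residues of flags of the same type are isomorphic. Hence $\Gamma$ is residually connected if and only if, for every $J \subseteq I$ with $|I \setminus J| \ge 2$, the residue of the \emph{standard} flag $F_J := \{G_j : j \in J\}$ (the cosets containing $1_G$) has connected incidence graph; the case $J = \emptyset$ records connectedness of $\Gamma$ itself, for which the asserted identity reads $G = \langle G_i \mid i \in I\rangle$.

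\emph{Step 2: residues are coset geometries.} I would show $\Gamma_{F_J} \cong \Gamma\bigl(G_J; (G_{J \cup \{i\}})_{i \in I \setminus J}\bigr)$. The setwise stabiliser of $F_J$ in $G$ is exactly $G_J = \bigcap_{j \in J} G_j$, and for each $i \in I \setminus J$ the coset $G_i$ lies in the residue (since $1 \in G_i \cap G_j$ for all $j \in J$) and has $G_J$-stabiliser $G_J \cap G_i = G_{J \cup \{i\}}$. Flag-transitivity of $G$ upgrades to flag-transitivity of $G_J$ on $\Gamma_{F_J}$: given flags $F_J \cup \Phi$ and $F_J \cup \Phi'$ of the same type, a type-preserving $g \in G$ carrying one to the other must fix $F_J$ pointwise, hence lies in $G_J$. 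A flag-transitive geometry is isomorphic to the coset geometry of its element stabilisers, which yields the claimed identification, with $G_J$ acting flag-transitively.

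\emph{Step 3: the connectedness criterion.} The core is to prove that a coset geometry $\Gamma(H; (H_i)_{i \in K})$ with $|K| \ge 2$ is connected if and only if $H = \langle H_i \mid i \in K\rangle =: N$. Incidence is $H_i g * H_j g'$ iff $g(g')^{-1} \in H_i H_j$; since every $H_i \subseteq N$, the set $R := \{H_j g : j \in K,\ g \in N\}$ is closed under passing to neighbours, so the connected component of $H_0$ is contained in $R$. For the reverse inclusion I would induct on the length of a word $g = h_{i_1}\cdots h_{i_m}$ (with $h_{i_t} \in H_{i_t}$) expressing a given $g \in N$: the facts that $H_j g$ and $H_{i_1} g$ are always incident (both contain $g$) and that $H_{i_1} g = H_{i_1}(h_{i_2}\cdots h_{i_m})$ absorbs the leftmost letter let me join $H_j g$ to a type-$i_1$ coset of strictly shorter word-length, reaching the base clique $\{H_j : j \in K\}$ after $m$ steps. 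Thus the component of $H_0$ equals $R$, and this is all of $X$ if and only if $N = H$.

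Assembling the three steps gives the theorem: residual connectedness is equivalent to connectedness of every residue $\Gamma_{F_J}$, which by Steps 2--3 holds exactly when $G_J = \langle G_{J \cup \{i\}} \mid i \in I \setminus J\rangle$ for all $J$ with $|I \setminus J| \ge 2$. I expect the main obstacle to be Step 2 --- specifically, verifying that the incidence inherited by the residue coincides with coset-incidence inside $G_J$ (equivalently, that the flag-transitive reconstruction is faithful and type-correct) --- since the induction in Step 3 is clean once the word-absorption identity is isolated, and Step 1 is routine from transitivity.
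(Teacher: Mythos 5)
The paper states this result purely as background, citing Buekenhout--Cohen \cite[Corollary 1.8.13]{BC}, and contains no proof of its own for you to diverge from. Your three-step argument --- reducing to the standard residues $F_J=\{G_j : j\in J\}$ via flag-transitivity, identifying each such residue with the coset geometry $\Gamma\bigl(G_J;(G_{J\cup\{i\}})_{i\in I\setminus J}\bigr)$ on which $G_J$ again acts flag-transitively (including the delicate check that residue incidence matches coset incidence, which your rank-two flag-transitivity argument handles), and the word-length induction showing a coset incidence system over at least two types is connected exactly when the given subgroups generate the group --- is correct and is essentially the classical Buekenhout--Hermand argument underlying the cited corollary.
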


Leemans proved the following result in his master's thesis to check flag-transitivity recursively on the rank of the geometry.

\begin{theorem}\label{FTlee2}\cite{Leemans94}
 Let $\Gamma(G,\{G_0,\ldots,G_{r-1}\})$ be a flag-transitive coset geometry of rank $r$, let $H$ be a subgroup of $G$ and let $\Gamma'(H,\{G_0 \cap H, \ldots,$ $G_{r-1} \cap H\})$ be a flag-transitive geometry. Then the incidence systems $\Gamma_{(ij)}(G,$ $\{G_i,G_j,H\})$ are flag-transitive geometries $\forall ~ 0\leq i,j \leq r-1$, $i \neq j$ if and only if  the incidence system $\Gamma''(G,\{G_0,\ldots,G_{r-1},H\})$ is a flag-transitive geometry.
\end{theorem}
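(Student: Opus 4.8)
The plan is to invoke the standard group-theoretic description of flag-transitive coset geometries: a coset incidence system $\Gamma(G,(G_i)_{i\in I})$ is a flag-transitive geometry if and only if it is a geometry and $G$ is transitive on its chambers, the latter being equivalent to the \emph{intersection property} that every pairwise incident family $(G_ig_i)_{i\in I}$ satisfies $\bigcap_{i\in I}G_ig_i\neq\emptyset$. (Chamber-transitivity together with the geometry property forces transitivity on flags of every type, since a chamber contains exactly one element of each type, so the type-$J$ flag inside a given chamber is unique.) Write $r$ for the type of $H$, so that $\Gamma''$ has type set $I\cup\{r\}$. I would dispatch the reverse implication first, as it is the easy one: each $\Gamma_{(ij)}(G,\{G_i,G_j,H\})$ is precisely the truncation of $\Gamma''$ to the type set $\{i,j,r\}$, and a truncation of a flag-transitive geometry to a subset of its types is again a flag-transitive geometry; hence $\Gamma''$ flag-transitive makes every $\Gamma_{(ij)}$ flag-transitive.

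For the forward implication I would translate the intersection property of $\Gamma''$ into identities among parabolic subgroups. Observe that the residue of the element $H$ in $\Gamma''$ is $\Gamma'(H,(G_i\cap H)_{i\in I})$, so its flag-transitivity is available by hypothesis, and that $\Gamma$ is the truncation of $\Gamma''$ away from type $r$. Given a pairwise incident family $(G_ig_i)_{i\in I}\cup\{Hh\}$ of $\Gamma''$, the flag-transitivity of $\Gamma$ lets me right-translate the whole family so that $g_i=1_{G^+}$ for all $i\in I$; the incidence of $Hh$ with each $G_i$ then reads $h\in\bigcap_{i\in I}HG_i$, while a common element of the entire family exists exactly when $h\in HG_I$. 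Thus the intersection property of $\Gamma''$ reduces to the family of identities
\[
\bigcap_{i\in S}HG_i = HG_S \qquad (S\subseteq I),
\]
and the geometry property of $\Gamma''$ would be checked in the same spirit, by extending each flag to a chamber, the flag-transitivity of $\Gamma'$ being brought in precisely when the internal structure of $H$ is needed.

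The heart of the argument, and the step I expect to be the main obstacle, is establishing these identities $\bigcap_{i\in S}HG_i=HG_S$ by induction on $|S|$. The inclusion $\supseteq$ is immediate since $G_S\subseteq G_i$ for each $i\in S$. The base case $|S|=2$ is exactly the intersection property of the rank-$3$ geometries $\Gamma_{(ij)}$, giving $HG_i\cap HG_j=H(G_i\cap G_j)$ for all $i\neq j$; this is the only place the hypothesis on the $\Gamma_{(ij)}$ enters. For the inductive step I would reduce a size-$m$ intersection to size-$(m-1)$ intersections, the essential tool being Dedekind's modular law applied in conjunction with the coherence among the $G_i$ supplied by the flag-transitivity of $\Gamma$ (and, internally to $H$, of $\Gamma'$). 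This local-to-global propagation, deducing a rank-$r$ intersection identity from its rank-$3$ instances, is the delicate point, very much in the spirit of Tits' local approach: the given flag-transitivity of $\Gamma$ and $\Gamma'$ must be combined carefully with the rank-$3$ conditions, since the pairwise identities alone do not force the full intersection without this extra coherence.
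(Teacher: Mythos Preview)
The paper does not supply a proof of this theorem: it is quoted from \cite{Leemans94} and stated without argument, so there is no in-paper proof to compare against. I can only assess your proposal on its own merits.

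Your overall architecture is the standard one and is sound. The reverse implication via truncations is correct. For the forward implication, the reduction to the identities $\bigcap_{i\in S}HG_i=HG_S$ is the right reformulation, and the base case $|S|=2$ is indeed exactly the hypothesis on the $\Gamma_{(ij)}$. Where your proposal remains a sketch is precisely the place you flag yourself: the inductive step. Saying that Dedekind's modular law together with the flag-transitivity of $\Gamma$ and $\Gamma'$ will do the job is plausible but not yet a proof. Concretely, to pass from $\bigcap_{i\in S}HG_i=HG_S$ to $\bigcap_{i\in S\cup\{j\}}HG_i=HG_{S\cup\{j\}}$ you need to show $HG_S\cap HG_j=H(G_S\cap G_j)$, and this is not an instance of the rank-$3$ hypothesis (which only gives $HG_i\cap HG_j=H(G_i\cap G_j)$ for single indices $i$). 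One has to bring in the intersection identities $\bigcap_{i\in S}G_iG_j=G_SG_j$ coming from the flag-transitivity of $\Gamma$, and the analogous identities inside $H$ coming from $\Gamma'$, and combine them carefully; the modular law alone, applied blindly, will not produce $HG_S\cap HG_j=HG_{S\cup\{j\}}$ without these extra ingredients being made explicit. Until that computation is written out, the forward implication is not established.
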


Two chambers of a geometry $\Gamma$ are {\em adjacent} if they differ by exactly one element. They are called {\em $i$-adjacent} if they differ in their elements of type $i$.

A geometry $\Gamma$ is {\em chiral} if $Aut_I(\Gamma)$ has two orbits on the chambers such that any two adjacent chambers lie in distinct orbits.
 
Observe that if a geometry is chiral, it is necessarily thin as if a rank one residue contains more than two elements, this contradicts chirality.
 
A \emph{hypertope} is a thin, residually connected incidence geometry. 
A hypertope $\Gamma$ is {\em regular} if $\Gamma$ is a flag-transitive geometry.
A hypertope $\Gamma$ is {\em chiral} if $\Gamma$ is a chiral geometry.
 
 Let $\Gamma(X,*,t,I)$ be a thin geometry and $i\in I$. If $C$ is a chamber of $\Gamma$, we let $C_i$ denote the chamber {\em $i$}-adjacent to $C$, that  is, the chamber that differs from $C$ only in its $i$-face. 
 
\subsection{$C$-groups and regular hypertopes}

Given a regular hypertope $\Gamma$ and a chamber $C$ of $\Gamma$, for each $i\in I$ let $\rho_i$ denote the automorphism mapping $C$ to $C_i$.
Then $\{\rho_0,\ldots, \rho_{n-1}\}$ is a generating set for $Aut_I(\Gamma)$ and $G_i=\langle \rho_j\,|\, j\neq i\rangle$ is the stabilizer of the $i$-face of $C$. 
Moreover $(Aut_I(\Gamma),\{ \rho_0,\ldots, \rho_{n-1}\})$ is a \emph{$C$-group}  \cite[Theorem~4.1]{FLW16}, that is, $\{ \rho_0,\ldots, \rho_{n-1}\}$ is a set of involutions generating $Aut_I(\Gamma)$ and satisfying the following condition, called the \emph{intersection condition}.
$$\forall I, J \subseteq \{0, \ldots, n-1\},
\langle \rho_i \mid i \in I\rangle \cap \langle \rho_j \mid j \in J\rangle = \langle \rho_k \mid k \in I \cap J\rangle.$$

From a $C$-group we can get a hypertope when the incidence system arising from Proposition~\ref{tits} is flag-transitive, as shown in the following theorem.

\begin{theorem}\cite[Theorem 4.6]{FLW16}\label{theorem46}
Let $G=\langle \rho_0, \ldots, \rho_{n-1}\rangle$ be a $C$-group of rank $n$ and let $\Gamma := \Gamma(G;(G_i)_{i\in I})$ with $G_i := \langle \rho_j | j \in I\setminus \{i\} \rangle$ for all $i\in I:=\{0, \ldots, n-1\}$.
If $G$ is flag-transitive on $\Gamma$, then $\Gamma$ is a regular hypertope.
\end{theorem}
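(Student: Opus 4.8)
The plan is to verify the three properties that, together with the assumed flag-transitivity, make $\Gamma$ a regular hypertope: that $\Gamma$ is a geometry, that it is thin, and that it is residually connected. The workhorse throughout will be the intersection condition, which I would first package into a single identity. Writing $\hat C := \{G_i : i \in I\}$ for the cosets of the identity, note that $\hat C$ is a chamber, since the $G_i$ pairwise share the identity and are therefore pairwise incident. For $J \subseteq I$, applying the intersection condition repeatedly to $G_J = \bigcap_{j\in J}\langle \rho_k : k\neq j\rangle$ yields $G_J = \langle \rho_k : k \in I\setminus J\rangle$, because $\bigcap_{j\in J}(I\setminus\{j\}) = I\setminus J$. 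In particular the cotype-$\{i\}$ flag $\hat C\setminus\{G_i\}$ has stabilizer $\bigcap_{j\neq i}G_j = \langle\rho_i\rangle$, a group of order $2$ since the generators are involutions. That $\Gamma$ is a geometry then follows from flag-transitivity: any flag of type $K$ is $G$-equivalent to the subflag $\{G_k : k\in K\}$ of $\hat C$, hence lies in a $G$-image of $\hat C$, which is a chamber.

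For thinness I would argue that every rank-one residue has exactly two elements; by flag-transitivity it suffices to treat the residue $\Gamma_F$ of the base corank-one flag $F = \hat C\setminus\{G_i\}$, all of whose elements have type $i$. The two cosets $G_i$ and $G_i\rho_i$ both lie in $\Gamma_F$ (each meets every $G_j$ with $j\neq i$, since $\rho_i\in G_j$ for $j\neq i$), and they are distinct because $\rho_i\notin G_i$, which the intersection condition supplies via $\langle\rho_i\rangle\cap G_i = 1$. I would then invoke residual flag-transitivity: the type-preserving flag-transitive action of $G$ on $\Gamma$ restricts to a flag-transitive action of the stabilizer $G_F$ on $\Gamma_F$, so $G_F = \langle\rho_i\rangle$ acts transitively on the type-$i$ elements of $\Gamma_F$. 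Since $|G_F| = 2$, the residue has at most two elements, hence exactly the two exhibited above.

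Residual connectedness I would obtain directly from the Buekenhout--Hermand criterion (Theorem~\ref{proposition:RC}), which applies because $\Gamma$ is by now a flag-transitive geometry. Using the identity $G_J = \langle\rho_k : k\in I\setminus J\rangle$, for any $J$ with $|I\setminus J|\ge 2$ one has $G_{J\cup\{i\}} = \langle\rho_k : k\in I\setminus J,\ k\neq i\rangle$, and as $i$ ranges over $I\setminus J$ every generator $\rho_k$ with $k\in I\setminus J$ appears in some $G_{J\cup\{i\}}$ (choose $i\neq k$, which is possible since $|I\setminus J|\ge 2$). Hence $\langle G_{J\cup\{i\}} : i\in I\setminus J\rangle = \langle\rho_k : k\in I\setminus J\rangle = G_J$, which is exactly the equality required by the criterion.

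Putting these together, $\Gamma$ is a thin, residually connected, flag-transitive geometry, i.e.\ a regular hypertope. The main obstacle I anticipate is the thinness step: one must justify that flag-transitivity passes to residues (the claim that $G_F$ is flag-transitive on $\Gamma_F$). This rests on $G$ acting in a type-preserving way, so that any element carrying $F\cup\{x\}$ to $F\cup\{x'\}$ maps the type-$t(F)$ part to itself and hence must fix $F$ setwise. Everything else is bookkeeping with the intersection condition; the one point there requiring care is the convention that the $\rho_i$ are genuine (nontrivial) involutions, without which $\langle\rho_i\rangle$ could collapse and thinness would fail.
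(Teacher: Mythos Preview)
The paper does not give its own proof of this statement: it is quoted verbatim as \cite[Theorem 4.6]{FLW16} in the background section and left unproven here, so there is nothing in this paper to compare your argument against.

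That said, your proof is correct and is essentially the standard one. The identity $G_J = \langle \rho_k : k\in I\setminus J\rangle$ is exactly what the intersection condition buys you, and from there the verification of the Buekenhout--Hermand criterion (Theorem~\ref{proposition:RC}) for residual connectedness is immediate. Your thinness argument is also sound: flag-transitivity of a type-preserving group does pass to residues in the way you describe (if $g$ sends the chamber $F\cup\{x\}$ to $F\cup\{x'\}$ it must fix each element of $F$ by type considerations, hence $g\in G_F$), and then $|G_F|=|\langle\rho_i\rangle|=2$ bounds the residue while the two exhibited cosets $G_i$ and $G_i\rho_i$ realize the bound. The only caveat you already flag yourself---that the $\rho_i$ are genuine involutions---is part of the definition of a $C$-group, so there is no gap.
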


\subsection{C$^+$-groups}\label{section6}

We now consider another class of groups 
from which we will be able to construct hypertopes. These hypertopes may or may not be regular. In the latter case, they will be chiral. 

Consider a pair $(G^+,R)$ with $G^+$ being a group and $R:=\{\alpha_1, \ldots, \alpha_{r-1}\}$ a set of generators of $G^+$.
Define $\alpha_0:=1_{G^+}$
 and $\alpha_{ij} := \alpha_i^{-1}\alpha_j$ for all $0\leq i,j \leq r-1$.
Observe that $\alpha_{ji} = \alpha_{ij}^{-1}$.
Let $G^+_J := \langle \alpha_{ij} \mid i,j \in J\rangle$ for $J\subseteq \{0, \ldots, r-1\}$.

If the pair $(G^+,R)$ satisfies condition (\ref{IC+}) below called the {\em intersection condition} IC$^+$, we say that $(G^+,R)$ is a {\em $C^+$-group}.
\begin{equation}\label{IC+}
\forall J, K \subseteq \{0, \ldots, r-1\}, with \;|J|, |K| \geq 2,
G^+_J \cap G^+_K = G^+_{J\cap K}.
\end{equation}
It follows immediately from the intersection condition IC$^+$ that, if $(G^+,R)$ satisfies IC$^+$, then $R$ is an independent generating set for $G^+$ (meaning that for every $i=1, \ldots, r-1$, $\alpha_i \not\in \langle \alpha_j : j \neq i\rangle$).

We now explain how to construct a coset geometry from a group and an independent generating set of this group.

\begin{construction}\cite[Construction  8.1]{FLW16}\label{hyper}
Let $I=\{1,\ldots, r-1\}$,  $G^+$ be a group and $R:=\{\alpha_1,\ldots, \alpha_{r-1}\}$ be an independent generating set of $G^+$.
Define $G^+_i := \langle \alpha_j | j \neq i \rangle$ for $i=1, \ldots, r-1$ and $G^+_0 := \langle \alpha_1^{-1}\alpha_j | j \geq 2 \rangle$.
The incidence system $\Gamma(G^+,R) := \Gamma(G^+; (G^+_i)_{i\in \{0,\ldots,r-1\}})$ constructed using Tits' algorithm (see Proposition~\ref{tits}) is the {\em incidence system associated to the pair} $(G^+,R)$. 
\end{construction}

If the incidence system $\Gamma(G^+,R)$ is a chiral hypertope, then $(G^+,R)$ is necessarily a $C^+$-group by the following theorem.

Given a chiral hypertope $\Gamma(X,*,t,I)$ (with $I:=\{0, \ldots, r-1\}$) and its automorphism group $G^+:=Aut_I(\Gamma)$, pick a chamber $C$.
For any pair $i\neq j \in I$, there exists an automorphism $\alpha_{ij}\in G^+$ that maps $C$ to $(C^i)^j$.
Also, $C\alpha_{ii}=(C^i)^i=C$ and $\alpha_{ij}^{-1}=\alpha_{ji}$.
We define 
\[\alpha_i := \alpha_{0i}\;(i=1\ldots, r-1)\]
and call them the {\it distinguished generators} of $G^+$ with respect to $C$.

\begin{theorem}\cite[Theorem 7.1]{FLW16}\label{cplusgroup}
Let $I:=\{0, \ldots, r-1\}$ and let $\Gamma$ be a chiral hypertope of rank $r$. Let $C$ be a chamber of $\Gamma$.
The pair $(G^+,R)$ where $G^+=Aut_I(\Gamma)$ and $R$ is the set of distinguished generators of $G^+$ with respect to $C$ is a $C^+$-group.
\end{theorem}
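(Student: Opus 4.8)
The plan is to verify directly the two requirements in the definition of a $C^+$-group for the pair $(G^+,R)$: that $R=\{\alpha_1,\dots,\alpha_{r-1}\}$ generates $G^+=Aut_I(\Gamma)$, and that the subgroups $G^+_J=\langle\alpha_{ij}\mid i,j\in J\rangle$ satisfy the intersection condition IC$^+$. Everything rests on one structural fact that I would prove first: because $\Gamma$ is thin and residually connected, $G^+$ acts \emph{freely} on each of its two chamber-orbits. Indeed, if $g\in G^+$ fixes $C$, then since $g$ is type-preserving and $\Gamma$ is thin, $g$ must send the unique $i$-adjacent chamber $C^i$ of $C$ to the unique $i$-adjacent chamber of $gC=C$, so $g(C^i)=C^i$ for every $i$; iterating along the chamber graph, which is connected by residual connectedness, shows that $g$ fixes every chamber and hence every face, so $g=1_{G^+}$. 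I would record for later the commutation identity $g(C^i)=(gC)^i$, valid for every $g\in G^+$ and $i\in I$, which is the same thinness-plus-type-preservation observation.

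Next I would make the generators precise. Since each adjacency exchanges the two chamber-orbits, $(C^i)^j$ lies in the same orbit as $C$; combined with transitivity on that orbit and the freeness above, this produces a \emph{unique} $\alpha_{ij}\in G^+$ with $\alpha_{ij}(C)=(C^i)^j$. The commutation identity then yields at once $\alpha_{ii}=1$ and the factorization $\alpha_{ij}=\alpha_i^{-1}\alpha_j$ (whence $\alpha_{ij}^{-1}=\alpha_{ji}$), so the combinatorially defined groups $G^+_J$ agree with the geometric rotations and $R$ really does control all the $\alpha_{ij}$. Generation is then a short step: as $G^+$ is regular on the base orbit, it suffices to see that $\langle R\rangle$ is transitive there; any chamber of that orbit is joined to $C$ by an even-length path in the connected chamber graph, and pairing consecutive edges realizes the displacement as a product of the $\alpha_{ij}\in\langle R\rangle$.

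The heart of the proof is IC$^+$, which I would obtain by identifying, for $|J|\ge 2$, the group $G^+_J$ with the stabilizer $\mathrm{Stab}(I\setminus J)$ in $G^+$ of the set of faces of $C$ whose type lies in $I\setminus J$. The inclusion $G^+_J\subseteq\mathrm{Stab}(I\setminus J)$ is easy, since $(C^i)^j$ agrees with $C$ in all types outside $\{i,j\}\subseteq J$. Granting the identification, the intersection condition follows formally, because an intersection of stabilizers of face-sets is the stabilizer of their union:
\[
G^+_J\cap G^+_K=\mathrm{Stab}(I\setminus J)\cap\mathrm{Stab}(I\setminus K)=\mathrm{Stab}\big((I\setminus J)\cup(I\setminus K)\big)=\mathrm{Stab}\big(I\setminus(J\cap K)\big)=G^+_{J\cap K}.
\]
For the reverse inclusion $\mathrm{Stab}(I\setminus J)\subseteq G^+_J$, I would argue that if $g\in G^+$ fixes every face of $C$ of type outside $J$, then $g$ stabilizes the flag $F$ consisting of those faces, hence stabilizes the residue $\Gamma_F$, which is again a thin, residually connected geometry with type-set $J$. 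Rerunning the transitivity argument of the previous paragraph \emph{inside} $\Gamma_F$ — whose chamber graph is connected because residues of residually connected geometries are residually connected — shows that $G^+_J$ already acts transitively on the chambers of $\Gamma_F$ lying in the orbit of $C$, so some $h\in G^+_J$ satisfies $h(C)=g(C)$, and freeness forces $g=h\in G^+_J$.

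I anticipate that this reverse inclusion is where the real difficulty sits, and it is the step I expect to absorb most of the work. It requires simultaneously the freeness of the action, the thinness and residual connectedness of \emph{every} residue (so that the two-orbit colouring restricts coherently to $\Gamma_F$ and the displacement stays within the types of $J$), and the assurance that no automorphism outside $G^+_J$ can fix all $(I\setminus J)$-faces. A clean way to organise this may well be an induction on the rank $r$, using the residues (which are themselves chiral or directly regular) as the base data; but in every formulation the crux is controlling the intersection so that it produces no rotations beyond those indexed by $J\cap K$.
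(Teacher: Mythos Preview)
The paper does not give its own proof of this statement: it is quoted verbatim as \cite[Theorem~7.1]{FLW16} and used as background, so there is nothing in the present paper to compare your argument against. What you have written is a faithful reconstruction of the proof in the original reference: freeness of the $G^+$-action on chambers via thinness and residual connectedness, generation by the $\alpha_{ij}$ through even-length walks in the chamber graph, and the identification $G^+_J=\mathrm{Stab}_{G^+}(F_{I\setminus J})$ from which IC$^+$ drops out as an intersection of stabilizers. Your diagnosis that the reverse inclusion $\mathrm{Stab}(I\setminus J)\subseteq G^+_J$ is the substantive step, and that it is handled by passing to the residue $\Gamma_F$ and invoking transitivity of $G^+_J$ on the appropriate chamber-orbit there, matches the strategy in \cite{FLW16}; the inductive packaging you suggest (residues of chiral hypertopes are themselves regular or chiral) is exactly how that source organises it.
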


\begin{corollary}\cite[Corollary 7.2]{FLW16}
The set $R$ of Theorem~\ref{cplusgroup} is an independent generating set for $G^+$. 
\end{corollary}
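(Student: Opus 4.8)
The plan is to derive the statement directly from the intersection condition IC$^+$. By Theorem~\ref{cplusgroup} the pair $(G^+,R)$ is a $C^+$-group, and since being a $C^+$-group already includes that $R$ generates $G^+$, the only thing left to prove is that $R$ is \emph{independent}, i.e.\ that $\alpha_i\notin\langle\alpha_j\mid j\neq i\rangle$ for each $i\in\{1,\dots,r-1\}$. This is exactly the deduction announced as ``immediate'' just after \eqref{IC+}, so the proof amounts to making that deduction explicit.

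Fix $i\in\{1,\dots,r-1\}$ and put $J:=\{0,i\}$ and $K:=\{0,1,\dots,r-1\}\setminus\{i\}$. First I would rewrite the two relevant parabolic subgroups in terms of $R$. Since $\alpha_0=1_{G^+}$ we have $\alpha_{0t}=\alpha_t$, and in general $\alpha_{st}=\alpha_{0s}^{-1}\alpha_{0t}$; feeding this into the definition $G^+_J=\langle\alpha_{st}\mid s,t\in J\rangle$ gives $G^+_J=\langle\alpha_i\rangle$, while---because $0\in K$---it gives $G^+_K=\langle\alpha_j\mid j\neq i\rangle$. Both $J$ and $K$ have at least two elements ($|K|=r-1\ge 2$ as $r\ge 3$), so IC$^+$ applies to the pair $(J,K)$ and, since $J\cap K=\{0\}$, yields
\[
\langle\alpha_i\rangle\cap\langle\alpha_j\mid j\neq i\rangle
= G^+_J\cap G^+_K=G^+_{J\cap K}=G^+_{\{0\}}=\langle\alpha_{00}\rangle=\{1_{G^+}\}.
\]

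It then remains to argue by contradiction. If $\alpha_i\in\langle\alpha_j\mid j\neq i\rangle$, then $\alpha_i$ lies in the intersection just computed, forcing $\alpha_i=1_{G^+}$; so I only need to rule this out. By construction $\alpha_i=\alpha_{0i}$ is the automorphism carrying the base chamber $C$ to $(C^0)^i$, and if this equalled $C$ then $C$ and $C^0$ would be simultaneously $i$-adjacent and $0$-adjacent, hence (all other types agreeing, and $i\neq 0$) equal---contradicting thinness, which makes $C^0\neq C$. Thus $\alpha_i\neq 1_{G^+}$, so $\alpha_i\notin\langle\alpha_j\mid j\neq i\rangle$; as $i$ was arbitrary, $R$ is independent. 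The computation is entirely routine, and the only mildly delicate point is this last verification that each distinguished generator is nontrivial, which I would handle exactly with the thinness observation above (equivalently, via the fact that $Aut_I(\Gamma)$ acts freely on each chamber-orbit of a chiral hypertope).
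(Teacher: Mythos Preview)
Your proof is correct and follows exactly the route the paper itself points to: the corollary is only cited from~\cite{FLW16} without proof here, but the paper remarks just after~\eqref{IC+} that independence ``follows immediately from the intersection condition IC$^+$,'' and your argument is precisely the spelled-out version of that deduction (including the necessary check that each $\alpha_i$ is nontrivial, handled via the chamber action).
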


The following theorem shows that if $(G^+,R)$ is a C$^+$-group and the incidence system associated to $(G^+,R)$ is chiral, then the intersection condition $IC^+$ of the C$^+$-group implies the residual connectedness of $\Gamma$.

\begin{theorem}\cite[Theorem 1.1]{LT}\label{LTRC}
Let $G$ be a group and $R$ be a set of generators of $G$ such that \((G^+,R)\) is a \(C^+\)-group.
Let \(\Gamma= \Gamma(G^+,(G_i^+)_{i\in I})\) be the incidence system associated to \(G^+\). Then, if \(\Gamma\) is chiral, it is residually connected.
\end{theorem}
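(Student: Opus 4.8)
The plan is to avoid the flag-transitive residual-connectedness criterion (Theorem~\ref{proposition:RC}), which does not apply because a chiral $\Gamma$ is not flag-transitive, and instead to argue directly on the chamber system, using that the ``rotations'' $\alpha_{ij}$ generate the parabolic subgroups. Recall that residual connectedness asks that every residue of rank at least two ($\Gamma$ itself included) have connected incidence graph; since a residue of a geometry is a geometry and in a geometry a connected chamber system forces a connected incidence graph, it suffices to show that the chamber system of each such residue is connected.

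First I would record two consequences of chirality. By IC$^+$ one has $\bigcap_{i\in I}G^+_{I\setminus\{i\}}=1$, so the stabiliser of the standard chamber is trivial and $G^+$ acts regularly on each of its two chamber-orbits $O_1,O_2$; fixing a base chamber $C_0\in O_1$ then identifies $O_1$ with $G^+$ via $C_0g\leftrightarrow g$ (here $G^+_{I\setminus\{i\}}$ is the maximal parabolic of type $i$). Secondly, although $G^+$ has two chamber-orbits, it is transitive on the flags of every proper type $J\subsetneq I$: a flag $F$ of corank at least one lies in chambers of both orbits (pass to the $i$-adjacent chamber for a missing type $i$), so two flags of type $J$ can be completed to chambers of the same orbit, and regularity on $O_1$ supplies $g\in G^+$ carrying one flag to the other. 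Consequently all residues of flags of a fixed type are isomorphic, and it is enough to treat the standard flag of each type.

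The heart of the proof is then the following. Let $F$ be the standard flag of type $J$, set $K:=I\setminus J$ with $|K|\ge 2$; by IC$^+$ its stabiliser is $\bigcap_{j\in J}G^+_{I\setminus\{j\}}=G^+_{K}=\langle\alpha_{ij}\mid i,j\in K\rangle$, and the $O_1$-chambers containing $F$ are exactly the $C_0g$ with $g\in G^+_K$. For $i,j\in K$ the double adjacency $C_0\mapsto(C_0^{i})^{j}=C_0\alpha_{ij}$ stays among the chambers containing $F$, since an adjacency of a type in $K$ fixes the type-$J$ part of a chamber. Because right multiplication by an element of $G^+$ preserves $K$-adjacency, the set of $g\in G^+_K$ for which $C_0g$ can be joined to $C_0$ by a walk of $K$-adjacencies is a subgroup of $G^+_K$; it contains every $\alpha_{ij}$ with $i,j\in K$, hence equals $G^+_K$. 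Thus all $O_1$-chambers through $F$ lie in a single connected component of the chamber system of $\Gamma_F$, and as every $O_2$-chamber through $F$ is a single $K$-adjacency away from one of them, that chamber system is connected and $\Gamma_F$ is connected. Taking $J=\varnothing$ gives connectedness of $\Gamma$ itself, and letting $J$ range over all types with $|I\setminus J|\ge 2$ yields residual connectedness.

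The main obstacle is conceptual rather than computational. Since $\Gamma$ is not flag-transitive, one cannot simply feed a parabolic-generation identity into Theorem~\ref{proposition:RC}: the naive flag-transitive model of $\Gamma_F$ would be the coset geometry $\Gamma(G^+_K;(G^+_K\cap G^+_{I\setminus\{i\}})_{i\in K})$, whose sub-parabolics $G^+_K\cap G^+_{I\setminus\{i\}}=G^+_{K\setminus\{i\}}$ collapse (they are trivial when $|K|=2$) and which is disconnected. The content of the argument is that the true residue is strictly larger, and that its connectedness is controlled not by these sub-parabolics but by the fact that the rotations $\alpha_{ij}$ ($i,j\in K$) generate the whole flag-stabiliser $G^+_K$ --- which is exactly IC$^+$ read off the chamber system. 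The one point demanding care is the identification $(C_0^{i})^{j}=C_0\alpha_{ij}$ of the double adjacency with right multiplication by $\alpha_{ij}=\alpha_i^{-1}\alpha_j$, with the base chamber $C_0$ in the role of $1_{G^+}$; this is where the chiral $C^+$-structure is genuinely used.
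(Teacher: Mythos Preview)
The paper does not contain a proof of this statement: Theorem~\ref{LTRC} is quoted verbatim from~\cite[Theorem 1.1]{LT} and used as a black box in the proof of Theorem~\ref{Chiral hypertopes}. There is therefore no in-paper argument to compare yours against.

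That said, your argument is correct and is essentially the natural one. The three ingredients you isolate are exactly right: regularity of $G^+$ on each of its two chamber-orbits (from the trivial chamber stabiliser, which is IC$^+$ applied with $J,K$ running over all $I\setminus\{i\}$); transitivity of $G^+$ on flags of every proper type (because any non-maximal flag sits in chambers of both orbits); and the identification $\mathrm{Stab}_{G^+}(F)=\bigcap_{j\in J}G^+_{I\setminus\{j\}}=G^+_{K}$, again by IC$^+$. The chamber-system step---closing the set $\{g\in G^+_K : C_0g$ is $K$-reachable from $C_0\}$ under the group operations and observing it contains every $\alpha_{ij}$ with $i,j\in K$---is clean and is precisely where the $C^+$-structure does the work. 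Your remark that the naive coset model $\Gamma(G^+_K;(G^+_{K\setminus\{i\}})_{i\in K})$ collapses, so that one really must argue on the chamber system rather than via Theorem~\ref{proposition:RC}, is the right diagnosis of the difficulty.

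Two small points worth making explicit in a final write-up. First, the implication ``connected chamber system $\Rightarrow$ connected incidence graph'' uses that in a geometry every element lies in some chamber; you state this but it is the only place where the hypothesis that $\Gamma$ is a geometry (implicit in the word ``chiral'') enters. Second, in the subgroup argument the $K$-adjacency walk from $C_0$ to $C_0g$ is allowed to pass through $O_2$-chambers; it is worth saying once that this is harmless because $K$-adjacencies preserve the $J$-part of a chamber regardless of orbit, so the walk stays inside $\Gamma_F$ throughout.
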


\section{Coset geometries with two orbits on their chambers}
\label{sec:chamber}

In~\cite[Theorem 8.1]{FLW16}, it was proven that any rank $n-2$ truncation of a chiral hypertope of rank $n\geq 3$ is a flag-transitive geometry. This result was improved in~\cite[Proposition 4.2]{LT}.

\begin{theorem}~\cite[Proposition 4.2]{LT}\label{FTtruncation} 
Let $\G(X, *, t, I)$ be a chiral hypertope of rank $|I| \ge 3$. Then, any truncation of $\G$ of rank at most $|I| -1$ is a flag-transitive geometry. \end{theorem}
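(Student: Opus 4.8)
The plan is to reduce the flag-transitivity of $G^+ := Aut_I(\G)$ on a truncation to the transitivity of $G^+$ on the flags of $\G$ of proper type, and to exploit chirality to upgrade the two-orbit action on chambers to a single orbit on every flag of corank at least one.

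First I would fix a subset $K \subsetneq I$, so that the $K$-truncation $^K\G$ has rank $|K| \le |I|-1$, and check that $^K\G$ is a geometry. A flag of $^K\G$ is simply a flag $F$ of $\G$ with $t(F) \subseteq K$; since $\G$ is a geometry, $F$ is contained in some chamber $C$ of $\G$, and the type-$K$ subflag of $C$ is then a chamber of $^K\G$ containing $F$. Thus every flag of $^K\G$ extends to a chamber of $^K\G$, so $^K\G$ is a geometry over $K$.

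The core of the argument is to prove that $G^+$ is transitive on the flags of $\G$ of every type $J \subsetneq I$; since each flag of $^K\G$ has type $J \subseteq K \subsetneq I$, this will give the flag-transitivity of $G^+$ on $^K\G$. Let $F$ be a flag of type $J$ and, using $J \subsetneq I$, pick $k \in I \setminus J$. Because $\G$ is thin, $F$ lies in a chamber $C$ and also in its $k$-adjacent chamber $C^k$, which differs from $C$ only in the element of type $k$; by chirality, $C$ and $C^k$ lie in the two distinct chamber-orbits. Hence every flag of corank at least one is contained in chambers from both orbits. Now take two flags $F_1, F_2$ of type $J$, extend $F_2$ to a chamber $C_2$ lying in some orbit $\mathcal O$, and choose a chamber $C'$ containing $F_1$ that also lies in $\mathcal O$. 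As $\mathcal O$ is a single $G^+$-orbit, there is $g \in G^+$ with $C' g = C_2$; since $g$ is type-preserving and the type-$J$ subflag of a chamber is unique, applying $g$ to the subflag $F_1$ of $C'$ gives $F_1 g = F_2$, as required.

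The step I expect to be the main obstacle is the passage from ``two orbits on chambers'' to ``one orbit on each proper flag'', that is, establishing cleanly that the chambers through a fixed flag of corank at least one meet both orbits. This is precisely where thinness (which makes $k$-adjacency well defined and single-valued) and the defining property of chirality (adjacent chambers always lie in different orbits, and there are exactly two) must be combined. Once this is in hand, the remaining orbit-chasing is routine, using only the uniqueness of the type-$J$ subflag of a chamber and the fact that $G^+$ acts transitively within each of its two chamber-orbits. Since the statement asks only for a flag-transitive geometry, residual connectedness of $^K\G$ need not be addressed here.
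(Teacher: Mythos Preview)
Your proposal is correct and follows essentially the same route as the paper's (commented-out) argument: use thinness to extend a proper flag to a chamber and to its $k$-adjacent chamber, invoke chirality to conclude these lie in the two different chamber-orbits, and then transport one flag to another via an element of $G^+$ acting within a single orbit. The only mild difference is presentational: the paper's sketch treats the case of corank-one truncations (chambers of $^{J}\G$ with $J=I\setminus\{k\}$) and leaves the general proper flag implicit, whereas you argue directly for every type $J\subsetneq I$ and also explicitly verify that $^K\G$ is a geometry; both amount to the same proof.
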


The above result is very strong. It means that if we want to check whether a coset geometry $\G$ is a chiral hypertope, we need every truncation of corank one of $\G$ to be a flag-transitive geometry.
We now determine what extra conditions a coset geometry needs to satisfy in order to have two orbits on its set of chambers.

\begin{theorem}\label{ChambersOrbits} 
Let $r\geq 3$ be an integer. Let $I := \{0, \ldots, r-1\}$.
Let $\G(G; (G_i)_{i\in I})$ be a coset geometry. 
Let $k\in I$ and $J := I\backslash \{k\}$.
Suppose that $G$ acts transitively on the chambers of the $(r-1)$-truncation $^J\G(G; (G_i)_{i \in J})$.
Then
$G$ has two orbits on the chambers of $\G$ if and only if
$\bigcap_{j \in J} (G_{k} G_j) = G_{k} \left( \bigcap_{j \in J} G_j\right) \cup G_{k} w \left( \bigcap_{j \in J} G_j\right)$
 for some $w \in \bigcap_{j \in J} (G_{k} G_j)\setminus G_{k} \left( \bigcap_{j \in J} G_j\right)$.
 \end{theorem}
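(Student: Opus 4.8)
The plan is to reduce the statement about $G$-orbits on the chambers of $\G$ to a statement about $G_J$-orbits on the set of type-$k$ elements completing the base chamber of $^J\G$, and then to rewrite that last condition as the displayed coset identity. First I would set $F_0 := \{G_j : j \in J\}$, the base chamber of $^J\G$, and identify the chambers of $\G$ whose type-$J$ part equals $F_0$ with the set $\Omega$ of type-$k$ cosets $G_k h$ incident to every element of $F_0$. A direct computation with Tits' incidence relation gives $G_k h * G_j$ if and only if $h \in G_k G_j$, so $\Omega = \{G_k h : h \in \bigcap_{j\in J} G_k G_j\}$. Here $\bigcap_{j\in J} G_k G_j$ is a union of left cosets of $G_k$ (each $G_k G_j$ is), so $\Omega$ is well defined, and the choice $h = 1_G$ recovers the base chamber. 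Each such pairwise-incident family of $r$ cosets is a flag of type $I$, hence a genuine chamber of $\G$, and conversely every chamber lying over $F_0$ arises in this way.

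Next I would invoke the standard orbit-transfer principle for the $G$-equivariant ``forget the type-$k$ element'' map $\pi$ from the chambers of $\G$ onto the chambers of $^J\G$. Since $G$ is assumed transitive on the chambers of $^J\G$, every $G$-orbit of chambers of $\G$ meets the fibre $\pi^{-1}(F_0)$, and two chambers in this fibre are $G$-equivalent if and only if they are equivalent under $\mathrm{Stab}_G(F_0)$. A short verification shows $\mathrm{Stab}_G(F_0) = \bigcap_{j\in J} G_j =: G_J$, since an element fixes each $G_j$ under right multiplication exactly when it lies in $G_j$. One also checks that right multiplication by $G_J$ preserves $\Omega$. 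Consequently the number of $G$-orbits on the chambers of $\G$ equals the number of $G_J$-orbits on $\Omega$, with $G_J$ acting by $G_k h \mapsto G_k h g$.

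Finally I would translate ``exactly two $G_J$-orbits on $\Omega$'' into the stated condition. The orbit of the base element $G_k$ is $\{G_k g : g \in G_J\}$, whose union as a subset of $G$ is $G_k G_J = G_k\bigl(\bigcap_{j\in J} G_j\bigr)$; likewise the orbit of any element $G_k w \in \Omega$ has union $G_k w G_J$. Thus $G_J$ has at most two orbits on $\Omega$, one being the base orbit, precisely when $\bigcap_{j\in J} G_k G_j = G_k\bigl(\bigcap_{j\in J} G_j\bigr) \cup G_k w\bigl(\bigcap_{j\in J} G_j\bigr)$ for some $w$, and the two orbits are distinct, giving exactly two, precisely when $w \notin G_k\bigl(\bigcap_{j\in J} G_j\bigr)$; this is the claimed equivalence. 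I expect the only delicate points to be bookkeeping ones: keeping the left/right conventions straight in the incidence computation and in the action, and confirming that the displayed equality of subsets of $G$ faithfully records the partition of $\Omega$ into $G_J$-orbits. The underlying group theory, namely the orbit-transfer bijection, is routine.
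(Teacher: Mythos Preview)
Your proposal is correct and follows essentially the same approach as the paper's proof: both identify the fibre over the base type-$J$ flag with $\{G_k h : h \in \bigcap_{j\in J} G_k G_j\}$, show that the stabilizer of this flag is $B = \bigcap_{j\in J} G_j$, reduce the count of $G$-orbits on chambers of $\G$ to the count of $B$-orbits on that fibre via transitivity on $^J\G$, and then read off that the $B$-orbits are exactly the sets $G_k h B$. The only organisational difference is that you invoke the orbit-transfer principle once and handle both implications simultaneously, whereas the paper treats the two directions separately; the underlying argument is the same.
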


\begin{proof}
$\fbox{$\Rightarrow$}$
By hypothesis, $G$ has a unique orbit on flags of type $J$.
Let $F_k :=\{G_j : j \in J\}$ be the base flag of type $J$.
The set of cosets of $G_k$ that are incident to $F_k$ is the set whose union is $\bigcap_{j \in J} (G_{k} G_j)$.
Also $G$ has two orbits on the chambers. One orbit is the orbit of the base chamber $\{G_i : i \in I\}$. It is the orbit obtained by the cosets $G_{k} \left( \bigcap_{j \in J} G_j\right)$ of $G_k$.
Since $G$ has a second orbit, there must be an element $w \in \bigcap_{j \in J} (G_{k} G_j)\setminus G_{k} \left( \bigcap_{j \in J} G_j\right)$ such that the second orbit is the set of cosets $G_{k} w \left( \bigcap_{j \in J} G_j\right)$ of $G_k$ hence the claimed equality is satisfied.

$\fbox{$\Leftarrow$}$
Let $B := \bigcap_{j \in J} G_j$. Since $G$ is flag-transitive on $^J\G$, the stabilizer of the flag $C := \{G_j : j \in J\}$ is $B$. 
The set of elements of type $k$ incident to $C$ is given by the cosets $G_k h$ such that $G_k h$ is incident to $G_j$ for all $j \in J$. This is equivalent to $h \in G_k G_j$ for all $j \in J$, so the set is $\{ G_k h : h \in \bigcap_{j \in J} (G_k G_j) \}$.

By assumption, $\bigcap_{j \in J} (G_k G_j) = G_k B \cup G_k w B$ with $w \notin G_k B$, and the union is disjoint. We now examine the action of $B$ on this set of type $k$ points.

%

Consider $h \in G_k B$. Then $h = g b$ for some $g \in G_k$ and $b \in B$, so $G_k h = G_k g b = G_k b$ since $g \in G_k$. Thus, the chambers containing $C$ from $G_k B$ are of the form $\{G_0, G_1, \dots, G_{k-1}, G_k b, G_{k+1}, \dots, G_{r-1}\}$ with $b \in B$. The group $B$ acts transitively on these chambers by right multiplication. 
Indeed, for any two chambers with $G_k b_1$ and $G_k b_2$, the element $b_1^{-1} b_2 \in B$ maps the first to the second, which implies that the group $B$ acts transitively on the set and these chambers are in the same orbit under the action of $B$.
We note that in particular, for any $b \in B$, the element $b^{-1} \in B$ maps the chamber with $G_k b$ to the chamber with $G_k$. 

On the other hand, for any $h \in G_k w B$, we have $h = g w b$ for some $g \in G_k$ and $b \in B$. Then $G_k h = G_k g w b = G_k w b$, and the chambers from $G_k w B$ are of the form $\{G_0, G_1, \dots, G_{k-1}, G_k w b, G_{k+1}, \dots, G_{r-1}\}$ with $b \in B$. For any $b_1, b_2 \in B$, the element $b_1^{-1} b_2$ maps the chamber with $G_k w b_1$ to the one with $G_k w b_2$. Thus, these chambers form another $B$-orbit with the chamber with $G_k w$ lies in.

The two orbits are disjoint. Indeed, suppose that there exist $b_1, b_2 \in B$ such that $G_k b_1 = G_k w b_2$. Then $w b_2 b_1^{-1} \in G_k$, which implies $w \in G_k B$, a contradiction. Therefore, $B$ has exactly two orbits on the set of type $k$ points incident to $C$.

By hypothesis, $G$ is flag-transitive on $^J\G$, hence it acts transitively on flags of type $J$ of $\G$. Thus, for any flag $C_1$ of type $J$ of $\G$, there exists $g \in G$ such that $g(C) = C_1$. The set of elements of type $k$ incident to $C_1$ is in bijection with the set of elements incident to $C$ via $g$. 
Therefore, the number of orbits of $G$ on the chambers of $\G$ is equal to the number of orbits of $G$ on the set of chambers  containing $C$. Furthermore, for any two elements of type $k$, say $G_k h_1$ and $G_k h_2$, incident with $C$, there exists $g\in G$ mapping $C\cup\{G_k h_1\}$ to $C\cup\{G_k h_2\}$ if and only if there is $g\in B$ such that $G_k h_1\cdot g=G_k h_2$. The number of orbits of $G$ on the chambers containing $C$ is equal to the number of orbits of $B$ on the elements of type $k$ that are incident with $C$.
Since $B$ has two orbits on this set, it follows that $G$ has exactly two orbits on the chambers of $\G$.
\end{proof}

\begin{corollary} 
Let $r\geq 3$ be an integer. Let $I := \{0, \ldots, r-1\}$.
Let $\G(G; (G_i)_{i\in I})$ be a coset geometry. 
Let $k\in I$ and $J := I\backslash \{k\}$.
Suppose that $G$ acts transitively on the chambers of the $(r-1)$-truncation $^J\G(G; (G_i)_{i \in J})$.
Suppose moreover that
$$\left| \bigcap_{j \in J} (G_{k} G_j) \right| = 2 \left| G_{k} \left( \bigcap_{j \in J} G_j \right) \right|$$
and that one of the following holds:
\begin{enumerate}
\item[(i)] $\bigcap_{j \in J} G_j \trianglelefteq \bigcap_{j \in J} (G_k G_j)$ or
\item[(ii)] $G_k \trianglelefteq \bigcap_{j \in J} (G_k G_j)$.
\end{enumerate}
Then $\G$ has exactly two orbits on chambers. 
\end{corollary}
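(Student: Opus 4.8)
The plan is to deduce the corollary directly from Theorem~\ref{ChambersOrbits}. Write $S := \bigcap_{j\in J}(G_k G_j)$ and $B := \bigcap_{j\in J}G_j$, so that the hypothesis reads $|S| = 2|G_k B|$ and the goal is to produce the partition $S = G_k B \cup G_k w B$ required by that theorem. First I would record the two easy containments $G_k B \subseteq S$ and, for any $w \in S$, $G_k w B \subseteq S$: both follow by writing $w \in G_k G_j$ as $w = h_j c_j$ with $h_j \in G_k$ and $c_j \in G_j$, then absorbing the factor from $G_k$ on the left and the factor from $B \subseteq G_j$ on the right into each product $G_k G_j$. Since $|S| = 2|G_k B| > |G_k B|$, the containment $G_k B \subsetneq S$ is proper, so I may choose $w \in S \setminus G_k B$. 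Disjointness of $G_k B$ and $G_k w B$ is immediate: if $g_1 b_1 = g_2 w b_2$ with $g_i \in G_k$ and $b_i \in B$, then $w = g_2^{-1}g_1 b_1 b_2^{-1} \in G_k B$, contradicting the choice of $w$.

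At this point $G_k B \sqcup G_k w B \subseteq S$ with $|S| = 2|G_k B|$, so the whole problem reduces to the single cardinality identity $|G_k w B| = |G_k B|$; granting it, the disjoint union has size $2|G_k B| = |S|$ and therefore equals $S$, which is exactly the hypothesis of Theorem~\ref{ChambersOrbits}. The standard double-coset count gives
$$|G_k w B| = \frac{|G_k|\,|B|}{|G_k \cap wBw^{-1}|}, \qquad |G_k B| = \frac{|G_k|\,|B|}{|G_k \cap B|},$$
so the desired identity is equivalent to $|G_k \cap wBw^{-1}| = |G_k \cap B|$. Under hypothesis (i) the element $w \in S$ normalises $B$, whence $wBw^{-1} = B$ and the two intersections are literally equal. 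Under hypothesis (ii) the element $w$ normalises $G_k$, so conjugation by $w^{-1}$ carries $G_k \cap wBw^{-1}$ bijectively onto $w^{-1}G_k w \cap B = G_k \cap B$; in either case the orders agree, giving $|G_k w B| = |G_k B|$.

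The subtle point I would flag is the meaning of the normality hypotheses, since $S = \bigcap_{j\in J}(G_k G_j)$ is a priori only a product set and need not be a subgroup of $G$. I therefore read condition (i) (resp.\ (ii)) as the assertion that every element of $S$ --- in particular the chosen $w$ --- normalises $B$ (resp.\ $G_k$), which is all the argument uses; alternatively one may first verify that under the hypotheses $S$ is a subgroup and then invoke the literal statement $B \trianglelefteq S$ (resp.\ $G_k \trianglelefteq S$). With the cardinality identity in hand the conclusion follows at once from Theorem~\ref{ChambersOrbits}, and the only genuinely computational ingredient is the double-coset size formula for $|G_k w B|$, which is where I expect the bookkeeping to require the most care.
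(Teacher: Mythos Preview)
Your proposal is correct and follows essentially the same route as the paper: both arguments reduce to the double-coset size formula $|G_k h B| = |G_k|\,|B|/|G_k \cap hBh^{-1}|$ and use the normality hypothesis to force every double coset in $S$ to have size $|G_k B|$, whence the size condition $|S| = 2|G_k B|$ yields exactly two such cosets and Theorem~\ref{ChambersOrbits} applies. Your explicit remark that $S$ need not be a subgroup, and that the hypotheses should be read as ``every element of $S$ normalises $B$ (resp.\ $G_k$)'', is a point the paper leaves implicit.
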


\begin{proof}
By hypothesis, the group $G$ acts flag-transitively on the chambers of $^J\G$. We take a standard flag $C = \{G_j : j \in J\}$ of type $J$. 
Any chamber of $\G$ containing $C$ has the form 
$$\{G_0, G_1, \dots, G_{k-1}, G_kh, G_{k+1}, \dots, G_{r-1}\}$$
for some $h \in G$. As shown above, the incidence conditions require that $h$ must lie in the set $\bigcap_{j \in J}(G_kG_j)$.
Moreover, the set of elements of type $k$ of $\Gamma$ that are incident with $C$ is $\{G_{k} h : h \in  \bigcap_{j \in J}(G_kG_j)\}$. Elements $G_kh_1$ and $G_kh_2$ are the same if and only if $G_kh_1=G_kh_2$.

We notice that $B= \bigcap_{j \in J}G_j$ is the stabilizer of $C$ in $G$. For any $b \in B$, by right multiplication, $b$ preserves the flag $C$ and sends the chamber corresponding to $G_k h$ to that corresponding to $G_k h b$. Therefore, the action of $B$ on the set of chambers containing $C$ is equivalent to right multiplication on the cosets ${G_k h}$ by elements of $B$. The orbits under this action are precisely the double cosets $G_k \backslash  \bigcap_{j \in J}(G_kG_j) / B$. As a consequence, the group $\bigcap_{j \in J}(G_kG_j)$ can be divided into disjoint union of sets $G_k h B$ for some $h\in \bigcap_{j \in J}(G_kG_j)$.

Suppose first that (i) holds, that is $B \trianglelefteq \bigcap_{j \in J} (G_k G_j)$. Then for any $h \in \bigcap_{j \in J} (G_k G_j)$, we have $h B h^{-1} = B$, so $|G_k \cap h B h^{-1}| = |G_k \cap B|$. Therefore, each double coset $G_k h B$ has size $|G_k| |B| / |G_k \cap B| = |G_k B|$. Since the total size is $2 |G_k B|$, there must be exactly two double cosets, i.e., $\bigcap_{j \in J} (G_k G_j) = G_k B \cup G_k w B$ for some $w \notin G_k B$.

Suppose next that (ii) holds, that is $G_k \trianglelefteq \bigcap_{j \in J} (G_k G_j)$, for any $h \in \bigcap_{j \in J} (G_k G_j)$, we have $h G_k h^{-1} = G_k$, so $G_k \cap h B h^{-1} = h (G_k \cap B) h^{-1}$, and thus $|G_k \cap h B h^{-1}| = |G_k \cap B|$. Therefore, each double coset $G_k h B$ has size $|G_k| |B| / |G_k \cap B| = |G_k B|$. Again, there are exactly two double cosets.

In both cases, the conditions of Theorem \ref{ChambersOrbits} are satisfied, so $G$ has exactly two orbits on the chambers of $\G$. 
\end{proof}

A direct corollary follows.
\begin{corollary} \label{trivialstab}
Let $r\geq 3$ be an integer. Let $I := \{0, \ldots, r-1\}$.
Let $\G(G; (G_i)_{i\in I})$ be a coset geometry. 
Let $k\in I$ and $J := I\backslash \{k\}$.
Suppose that $G$ acts transitively on the chambers of the $(r-1)$-truncation $^J\G(G; (G_i)_{i \in J})$.
If 
 $\bigcap_{j \in J} G_j = \{1_G\}$ and $\left| \bigcap_{j \in J} (G_k G_j) \right| = 2 |G_k|$ hold,
then $\G$ has exactly two orbits on the set of chambers of $\Gamma$. 
\end{corollary}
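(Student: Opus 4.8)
The plan is to deduce this statement directly from the preceding corollary by specializing it to the case where the parabolic subgroup $B := \bigcap_{j \in J} G_j$ is trivial. First I would note that the flag-transitivity hypothesis on the truncation ${}^J\G$ is literally the same as the one assumed in the preceding corollary, so it transfers verbatim and requires no further argument.

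Next I would check that the two remaining hypotheses of the preceding corollary follow automatically once $B = \{1_G\}$. Since $B$ is trivial we have $G_k B = G_k \{1_G\} = G_k$, so $\left| G_k\left(\bigcap_{j \in J} G_j\right)\right| = |G_k|$. The assumption $\left|\bigcap_{j \in J}(G_k G_j)\right| = 2|G_k|$ then reads exactly as $\left|\bigcap_{j \in J}(G_k G_j)\right| = 2\left| G_k\left(\bigcap_{j \in J} G_j\right)\right|$, which is precisely the cardinality hypothesis needed. Moreover, the trivial subgroup is normal in every group, so $B \trianglelefteq \bigcap_{j \in J}(G_k G_j)$ holds without any effort, and this is condition (i) of the preceding corollary.

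With both hypotheses verified, applying the preceding corollary yields that $G$ has exactly two orbits on the chambers of $\G$, which is the desired conclusion. There is essentially no obstacle here: the only point to be mindful of is the elementary bookkeeping identity $G_k B = G_k$ when $B$ is trivial, combined with the observation that triviality of $B$ makes the normality condition (i) free of charge. Hence the statement follows immediately, justifying its description as a direct corollary.
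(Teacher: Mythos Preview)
Your argument is correct and matches the paper's treatment exactly: the paper offers no proof beyond the phrase ``A direct corollary follows,'' and you have supplied precisely the two-line verification (namely $G_kB = G_k$ when $B=\{1_G\}$, and the trivial subgroup is automatically normal) that makes this immediate from the preceding corollary.
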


\section{Chiral hypertopes}
\label{sec:chiralhypertope}
We now put everything together to get group-theoretical conditions on the coset incidence system associated to a $C^+$-group for it to give a chiral hypertope, which is our main result -- Theorem \ref{Chiral hypertopes}.
\begin{proof}[Proof of Theorem~\ref{Chiral hypertopes}]
If $\G$ is a chiral hypertope, by Theorem~\ref{FTtruncation}, condition (i) is true and since $\G$ is thin, condition (ii) is satisfied (for otherwise there would be a rank one residue containing more than two elements).
Moreover, condition (iii) must be satisfied for, otherwise, $\G$ would again not be thin.
Condition (iv) follows from the fact that $\G$ is chiral, and hence, two adjacent chambers have to be in distinct orbits. If there is an automorphism of $G^+$ that inverts all elements of $R$, this automorphism will swap adjacent chambers.

Suppose there exists a $k\in I$ such that conditions (i) to (iv) are satisfied and let us prove now that $\G$ is then a chiral hypertope.
Condition (ii) implies that for any flag of rank $r-1$, the stabilizer in $G^+$ is trivial. Combined with conditions (i) and (iii) and Corollary \ref{trivialstab}, it implies that $G^+$ has exactly two orbits on the chambers of $\G$ and that every residue of rank one contains exactly two elements. Therefore, $\G$ is thin.

Condition (iv) ensures that $\G$ is chiral, as it prevents the existence of an automorphism of $G^+$ that would make the geometry flag-transitive. The residual connectedness of $\G$ follows from Theorem~\ref{LTRC}, which states that chiral incidence systems associated to $C^+$-groups are necessarily residually connected. 

The fact that if conditions (i) to (iv) are satisfied for $k$ then they must also be satisfied for any other element of $I$ is obvious.
\end{proof}

In other words, if the geometry $\G$ associated with a $C^+$-group is a hypertope with two orbits, along with Condition (iv), by~\cite[Theorem 8.2]{FLW16}, the geometry $\G$ is a chiral hypertope. 

Observe that by Theorem~\ref{FTtruncation}, we can pick any $k\in I$ in the theorem above. 

Observe that if conditions (i), (ii) and (iii) are satisfied but condition (iv) is not, then $(G^+,R)$ is a $C^+$-group whose coset incidence system is actually a regular hypertope. The automorphism group of that hypertope is then a group having $G^+$ as subgroup and being twice as large as $G^+$.
\section{{\sc Magma} code}\label{code}
\begin{verbatim}
/*
The following function checks if a pair (G,R) where G is a group and R 
a set of generators of G is a C+-group
*/
function IntersectionProperty(G,R)
  // given a group G and a set R of generators, 
  // checks if (G,R) is a C+-group
  S := [[Id(G)] cat R];
  for i := 2 to #S[1] do
    T := [];
    for j := 1 to #S[1] do
      Append(~T,S[1][i]^-1*S[1][j]);
    end for;
    Append(~S,T);
  end for;
  for I in Subsets({1..#S}) do
    if #I ge 2 then
      for J in Subsets({1..#S}) do
        if #J ge 2 then
          if #(sub<G|{S[x,y] :  x,y in I| x ne y}> meet 
               sub<G|{S[x,y] : x,y in J|x ne y}>) ne 
             #sub<G|{S[x,z] : x,z in I meet J| x ne z}> then 
               return false;
          end if;
        end if;
      end for;
    end if;
  end for;
  return true;
end function;

/*
The following function construct a coset incidence system from a 
C+-group using Construction 2.5.
*/
function CPlusGroupToCosetGeometry(G,S);
 T:=Set(S);
 Gis:= {sub<G|x> : x in Subsets(T,#S-1)} join 
       {sub<G| {S[1]^-1*S[j] : j in {2..#S}}>};
 return CosetGeometry(G,Gis);
end function;

/*
The following function checks if the coset incidence system constructed 
from a C+-group (G,R) is a chiral hypertope. It returns true if it is 
and false with a number stating which property has failed in Theorem 4.1 
if it is not.
*/
function IsChiralHypertope(G,R)
  cg := CPlusGroupToCosetGeometry(G,R);
  G := Group(cg);
  m := MinimalParabolics(cg);
  orders := {#x : x in m};
  if #orders ne 1 or not(1 in orders) then
    // condition (ii) is not satisfied
    return false, 2;
  end if;
  M:=MaximalParabolics(cg);
  I := [2..#M];
  t := CosetGeometry(G,{M[i] : i in I});
  if not(IsFTGeometry(t)) then
    // condition (i) is not satisfied
    return false, 1;
  end if;
  cardGk := #M[1];
  flags := M[1]*M[2];
  for l := 3 to #M do
    flags := flags meet M[1]*M[l];
  end for;
  if #flags ne 2*cardGk then
    //condition (iii) is not satisfied
    return false, 3;
  end if;
  if IsHomomorphism(sub<G|R>, sub<G|R>, [R[i]^-1 : i in [1..#R]]) then
    // condition (iv) is not satisfied, we have a regular hypertope
    return false, 4;
  end if;
  return true;
end function;
\end{verbatim}
\section{Acknowledgements}
Dimitri Leemans acknowledge financial support from the Actions de Recherche Concertées of the Fédération Wallonie-Bruxelles, and Wei-Juan Zhang is supported by the National Natural Science Foundation of China (Grant No. 12201486) and the Fundamental Research Funds for the Central Universities (Grant No. xxj032025053).


\end{document}